\newcommand{\Z}{\mathbb{Z}}
\newcommand{\D}{\mathbb{D}}
\newcommand{\cG}{\mathcal{G}}
\newcommand{\Ret}{\mathrm{Ret}}
\newcommand{\Sym}{\mathbb{S}}
\newcommand{\id}{\mathrm{id}}
\newcommand{\Aut}{\mathrm{Aut}}
\newcommand{\Soc}{\mathrm{Soc}}
\newcommand{\Ker}{\mathrm{Ker}}
\numberwithin{table}{section}
\numberwithin{equation}{section}
\numberwithin{figure}{section}
\theoremstyle{plain}
\newtheorem{thm}{Theorem}[section]
\theoremstyle{plain}
\theoremstyle{plain}
\newtheorem*{question*}{Question}
\newtheorem*{conjecture*}{Conjecture}
\newtheorem{example}[thm]{Example}
\theoremstyle{remark}
\newtheorem{rem}[thm]{Remark}
\theoremstyle{plain}
\newtheorem{exa}[thm]{Example}
\newtheorem{convention}[thm]{Convention}
\begin{document}

\title[Multipermutation solutions]{A characterization of finite multipermutation solutions of the Yang--Baxter equation}

\author{D. Bachiller}
\author{F. Ced\'o}
\author{L. Vendramin}

\thanks{The two first-named authors are partially supported by the grant
MINECO MTM2014-53644-P.
The third-named author is partially supported by PICT-2014-1376,
MATH-AmSud 17MATH-01, ICTP,
ERC advanced grant 320974
and the Alexander von Humboldt Foundation.}

\begin{abstract}
We prove that a finite non-degenerate involutive set-theoretic
solution $(X,r)$ of the Yang--Baxter equation is a multipermutation
solution if and only if its structure group $G(X,r)$ admits a
left ordering or equivalently it is poly-$\Z$.
\end{abstract}

\maketitle

\noindent{\em 2010 MSC: Primary 16T25, 20F16, 20F60.}

\noindent\keywords{\em Keywords: Yang-Baxter equation, set-theoretic solution,
brace, ordered groups, poly-(infinite cyclic) group}

\section*{Introduction}

According to Drinfeld~\cite{MR1183474}, a set-theoretic
solution of the Yang--Baxter equation is a pair $(X,r)$, where $X$
is a set and $r\colon X\times X\to X\times X$ is a bijective map
such that
\[
(r\times\id_X)(\id_X\times r)(r\times\id_X)= (\id_X\times
r)(r\times\id_X)(\id_X\times r).
\]

The seminal papers \cite{MR1722951} and~\cite{MR1637256} initiated
the study of non-degenerate involutive set-theoretic solutions of
the Yang--Baxter equation. Etingof, Schedler and Soloviev introduced
the structure group $G(X,r)$ of a solution $(X,r)$ as the group
presented with set of generators  $X$ and with relations $xy=uv$
whenever $r(x,y)=(u,v)$. This group turned out to be very important
to understand set-theoretic solutions. As proved by Gateva-Ivanova
and Van den Bergh, the structure group $G(X,r)$ of a finite
non-degenerate involutive set-theoretic solution of the Yang--Baxter
equation is a Bieberbach group, i.e. a finitely generated
torsion-free abelian-by-finite group.

In \cite{MR1722951} multipermutation solutions were introduced. This
is an important notion that was intensively
studied~\cite{BCJO_family,MR3574204,MR2652212,MR3177933,MR2095675,
MR2885602,MR3437282}. In~\cite[Proposition 4.2]{MR2189580} Jespers
and Okni\'nski proved that the structure group of a finite
multipermutation solution is poly-$\Z$. The main result of this
paper is to prove the converse: a finite solution $(X,r)$ such that
$G(X,r)$ is poly-$\Z$ is a multipermutation solution.

To prove our result we use the language of braces introduced by Rump
in~\cite{MR2278047}.  Braces are algebraic structures that
generalize radical rings. This fact allows us to use tools and
techniques from ring theory to study set-theoretic solutions of the
Yang--Baxter equation.

In~\cite[Theorem 23]{MR639438} Farkas proved that a Bieberbach group
is poly-$\Z$ if and only if it admits a left ordering. Since
Chouraqui proved in~\cite[Theorem 1]{MR2764830} that the structure
group of a finite non-degenerate involutive set-theoretic
solution of the Yang--Baxter equation is a Garside group, our
result in particular yields an infinite family of Garside groups
that are not left orderable.

\section{Preliminaries}

A \emph{set-theoretic solution} of the Yang--Baxter equation is a
pair $(X,r)$, where $X$ is a set and $r\colon X\times X\to X\times
X$ is a bijective map such that
\[
(r\times\id_X)(\id_X\times r)(r\times\id_X)= (\id_X\times
r)(r\times\id_X)(\id_X\times r).
\]

A solution $(X,r)$ is said to be \emph{involutive} if
$r^2=\id_{X^2}$ and it is said to be \emph{non-degenerate} if
\[
r(x,y)=(\sigma_x(y),\gamma_y(x)),
\]
where $\sigma_x$ and $\gamma_x$ are permutations of $X$ for all
$x\in X$. The \emph{structure group} $G(X,r)$ of a
non-degenerate solution $(X,r)$ is defined as the group
presented with set of generators $X$ and with relations $xy=uv$
whenever $r(x,y)=(u,v)$. In~\cite[Theorem 1]{MR2764830} Chouraqui
proved that the structure group of a non-degenerate involutive
set-theoretic solution of the Yang--Baxter equation is a Garside
group. A simpler proof of this result was recently given by Dehornoy
in~\cite{MR3374524}.

\begin{example}
\label{exa:rack} Let $X$ be a conjugacy class of a finite group $G$
such that the subgroup generated by $X$ is non-abelian.  Then the
map
\[
r\colon X\times X\to X\times X,\quad r(x,y)=(xyx^{-1},x),
\]
is a non-degenerate solution of the Yang--Baxter equation. This
solution is not involutive.
We claim that $G(X,r)$ is not a Garside group.  Let $G(X,r)$ act
on $X$ by conjugation. Then the center of $G(X,r)$ is the kernel of this
action and hence it has finite index in $G(X,r)$. This implies that all
conjugacy classes of $G(X,r)$ are finite.  Thus the derived subgroup of
$G(X,r)$ is a finite group by a theorem of
Schur~\cite[Theorem~7.57]{MR1307623}. In particular, $G(X,r)$ has torsion
elements and hence $G(X,r)$ is not a Garside group.
\end{example}

\begin{rem}
In~\cite{MR2764830} it is conjectured that structure groups of finite
non-degenerate solutions are Garside groups. Example~\ref{exa:rack} shows
that this conjecture is not true.
\end{rem}

\begin{convention}
A \emph{solution} $(X,r)$ of the YBE will always be a
non-degenerate involutive set-theoretic solution of the
Yang--Baxter equation.
\end{convention}

A left \emph{brace} is an abelian group $(A,+)$ with another group
structure with multiplication $A\times A\to A$, $(a,b)\mapsto ab$, such that
\[
a(b+c)+a=ab+ac,\quad
a,b,c\in A.
\]
It is known that in any left brace $A$ the neutral elements of the groups
$(A,+)$ and $(A,\cdot)$ coincide.  If $A$ is a left brace, then the map
$\lambda\colon (A,\cdot)\to\Aut(A,+)$ given by $\lambda_a(b)=ab-a$ is a group
homomorphism. It follows from the definition that $ab=a+\lambda_a(b)$ and
$a+b=a\lambda^{-1}_a(b)$ for all $a,b\in A$.

An \emph{ideal} $I$ of a left brace $A$ is a normal subgroup $I$ of the
multiplicative group of $A$ such that $\lambda_a(y)\in I$ for all $a\in A$ and
$y\in I$.  The \emph{socle} of a left brace $A$ is defined as the set
\[
\Soc(A)=\{a\in A:\lambda_a=\id\}=\{a\in A:a+b=ab\text{ for all $b\in A$}\}.
\]
The socle of $A$ is an ideal of $A$.

Rump proved that each left brace $A$ produces a solution of the
YBE
\[
    r_A\colon A\times A\to A\times A,\quad
    r_A(a,b)=(\lambda_a(b),\lambda^{-1}_{\lambda_a(b)}(a)).
\]

One of the main results of~\cite{MR1722951} is the following: If
$(X,r)$ is a solution of the YBE, then there exists a
bijective $1$-cocycle $G(X,r)\to \Z^{(X)}$, where $\Z^{(X)}$ is the
free abelian group on $X$. From this it immediately follows that the
canonical map $\iota\colon X\to G(X,r)$ is injective. Now using the
language of braces the existence of a bijective $1$-cocycle can be
written as follows: If $(X,r)$ is a solution, there exists a unique
left brace structure over the structure group $G(X,r)$ such that the
additive group of $G(X,r)$ is isomorphic to $\Z^{(X)}$, the
multiplicative structure is that of $G(X,r)$ and such that
\[
r_{G(X,r)}(\iota\times\iota)=(\iota\times\iota)r.
\]

The \emph{permutation group} $\cG(X,r)$ of a solution $(X,r)$ of the
YBE is defined as the group generated by $\{\sigma_x:x\in X\}$,
where $r(x,y)=(\sigma_x(y),\gamma_y(x))$. It is known that the
map $x\mapsto \sigma_x$ extends to a homomorphism of groups
$\pi\colon G(X,r)\to\cG(X,R)$ such that $\Ker(\pi)=\Soc(G(X,r))$ and
therefore $\cG(X,r)$ has a unique structure of left brace such that
the group isomorphism
\[
\overline{\pi}\colon G(X,r)/\Soc(G(X,r))\to \cG(X,r)
\]
induced by $\pi$ is an isomorphism of left braces.

\begin{rem}
    \label{rem:ideals}
    Let $B$ be a left brace. Using the operation
    \[
    a*b=ab-a-b=(\lambda_a-\id)(b),\quad
    a,b\in B,
    \]
    Rump introduced
    the series
    \[
    B=B^{(1)}\supseteq B^{(2)}\supseteq B^{(3)}\supseteq\cdots,
    \]
    where $B^{(m+1)}=B^{(m)}*B$ is the additive group generated by
    \[
    \{(\lambda_a-\id)(b):a\in B^{(m)},\,b\in B\}
    \]
    for all $m\geq1$.  As a corollary of~\cite[Proposition 6]{MR2278047} Rump
    proved that each $B^{(m)}$ is an ideal of $B$.  Notice that
    this corollary refers to right braces.
\end{rem}

For any left brace $A$ and a subset $X\subseteq A$ we will denote by $\langle
X\rangle$ the subgroup of $(A,\cdot)$ generated by $X$. Similarly $\langle
X\rangle_+$ will denote the subgroup of $(A,+)$ generated by $X$.

\begin{rem}
    \label{rem:generators}
    Let $(X,r)$ be a finite solution of the YBE and let $G=G(X,r)$. Let $m$ be
    a positive integer and let $X_1,\dots,X_r$ be the orbits of $X$ under the
    action of $G^{(m)}$. Then
    \begin{align*}
        G^{(m+1)}&=\langle (\lambda_a-\id)(b):a\in G^{(m)},\,b\in G\rangle_+\\
        &=\langle (\lambda_a-\id)(x):a\in G^{(m)},\,x\in X\rangle_+\\
        &=\langle y-x:x,y\in X_i,\,1\leq i\leq r\rangle_+.
    \end{align*}
    The second equality follows from the fact that $(G,+)$ is generated by $X$
    and $\lambda_a$ is an automorphism of $(G,+)$. The third equality is
    obtained using that $\lambda_a(x)\in X$ for all $x\in X$ and all $a\in G$.
\end{rem}

\section{Multipermutation solutions}

Let $(X,r)$ be a solution of the YBE. Consider the equivalence
relation on $X$ given by $x\sim y$ if and only if
$\sigma_x=\sigma_y$. The \emph{retraction} of $(X,r)$ is defined as
the solution $\Ret(X,r)$ induced by this equivalence relation. One
defines recursively $\Ret^{m+1}(X,r)=\Ret(\Ret^m(X,r))$ for all $m$.
A solution $(X,r)$ of the YBE is said to be a
\emph{multipermutation} solution of level $m$ if $m$ is the minimal
positive integer such that $\Ret^{m}(X,r)$ has only one element. A
solution $(X,r)$ of the YBE is said to be \emph{irretractable}
if $\Ret(X,r)=(X,r)$.

Recall that a group $G$ is said to be \emph{poly}-$\Z$ if it has a subnormal
series
\[
\{1\}=G_0\triangleleft G_1\triangleleft\cdots\triangleleft G_n=G
\]
such that each quotient $G_i/G_{i-1}$ is isomorphic to $\Z$. A group
$G$ is said to be \emph{left orderable} if there is a total order
$<$ on $G$ such that for any $x,y,z\in G$, $x<y$ implies
$zx<zy$.

The main result of the paper is the following theorem.

\begin{thm}
Let $(X,r)$ be a finite non-degenerate involutive
set-theoretic solution of the Yang--Baxter equation. Then the
following statements are equivalent:
\begin{enumerate}
\item $(X,r)$ is a multipermutation
solution.
\item $G(X,r)$ is left orderable.
\item $G(X,r)$ is poly-$\Z$.
\end{enumerate}
\end{thm}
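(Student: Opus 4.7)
The equivalence (2) $\Leftrightarrow$ (3) is immediate from Farkas's theorem~\cite[Theorem~23]{MR639438} applied to the Bieberbach group $G(X,r)$, and the implication (1) $\Rightarrow$ (3) is~\cite[Proposition~4.2]{MR2189580}. The substantive content is (3) $\Rightarrow$ (1), which I would approach by analyzing the descending chain of ideals $G = G^{(1)} \supseteq G^{(2)} \supseteq \cdots$ from Remark~\ref{rem:ideals} inside the brace $G := G(X,r)$, whose additive group is $\Z^{|X|}$.

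The first step is to recast multipermutation in the language of this chain. Since $X$ embeds in $(G,+)$ as a $\Z$-basis, Remark~\ref{rem:generators} gives that $G^{(m+1)}=0$ if and only if every $G^{(m)}$-orbit on $X$ under $\lambda$ is a singleton; and since the sublattices of $\Z^{|X|}$ generated by differences coming from distinct orbits sit on disjoint coordinate blocks, one obtains more precisely
\[
\mathrm{rk}\bigl(G^{(m+1)}\bigr)=|X|-r_m,
\]
where $r_m$ denotes the number of $G^{(m)}$-orbits on $X$. Combining this with the identification of retraction with a suitable brace quotient (reflecting the fact that $\sigma_x=\sigma_y$ exactly when $xy^{-1}\in\Soc(G)$), one shows by induction on the multipermutation level the key equivalence: $(X,r)$ is a multipermutation solution if and only if $G^{(m)}=0$ for some $m$. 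Thus (3) $\Rightarrow$ (1) reduces to showing that this chain terminates at zero whenever $(G,\cdot)$ is poly-$\Z$.

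For this final implication the ranks of the $G^{(m)}$ form a non-increasing sequence of non-negative integers and therefore stabilize, say at $|X|-s$ for some $0\leq s\leq|X|$. If $s=|X|$ then eventually every $G^{(m)}$-orbit on $X$ is a singleton and we are done, so the main obstacle is to rule out $s<|X|$. The plan is to use Farkas's theorem to endow $G$ with a left ordering and to combine this with the fact that the $\lambda$-action of $G$ on $X$ factors through the finite permutation group $\cG(X,r)$. A hypothetical stable non-zero $H=G^{(m_0)}$ would be a sub-brace of $G$ whose multiplicative group is poly-$\Z$ and which acts on $X$ with the same orbit decomposition as $G^{(m_0+1)}$; the interplay between the left-orderability of $H$ and the finite-order action of $H$ on $X$ should force some orbit to collapse to a point, strictly increasing $r_{m_0+1}$ and contradicting the stability. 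Executing this last orbit-shrinking step rigorously, and in particular pinpointing the element of $X$ whose $H$-orbit is trivialised by the ordering, is where the principal technical work lies.
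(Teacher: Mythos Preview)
Your reductions are sound, and the rank formula $\mathrm{rk}\,G^{(m+1)}=|X|-r_m$ is correct. The gap is exactly where you flag it, and the mechanism you propose does not work as stated: left-orderable groups happily act transitively on finite sets (already $\Z$ does), so there is no general ``orderability forces an orbit to collapse'' principle to invoke. Rank stabilisation is also too weak to produce the object you want---one can have $\mathrm{rk}\,G^{(m)}=\mathrm{rk}\,G^{(m+1)}$ with $G^{(m)}\supsetneq G^{(m+1)}$ forever, so there is no ``stable $H=G^{(m_0)}$'' in $G$ to analyse.

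The paper argues the contrapositive $\neg(1)\Rightarrow\neg(2)$ and exploits the full Farkas characterisation: a Bieberbach group is left orderable if and only if every nontrivial subgroup has nontrivial centre. Stabilisation is obtained in the \emph{finite} brace $\cG=\cG(X,r)$, where one genuinely gets $\cG^{(m+1)}=\cG^{(m)}\neq 0$ for some $m$; this lifts to $G^{(m)}=G^{(m+1)}+(\Soc(G)\cap G^{(m)})$, so $H:=G^{(m+1)}$ has the same $\lambda$-action on $X$ as $G^{(m)}$ and in particular acts transitively on each $G^{(m)}$-orbit $X_i$. One then shows $Z(H)=1$ by a direct computation in $(G,+)=\Z^{(X)}$: for $z\in Z(H)$ (reduced to $z\in\Soc(G)$ via torsion-freeness and finite index of $\Soc(G)$) one has $\lambda_h(z)=z$ for all $h\in H$; decomposing $z$ along the orbits, transitivity of $H$ on each $X_i$ forces all coefficients on $X_i$ to coincide, while $z\in G^{(m+1)}=\langle y-x:x,y\in X_i\rangle_+$ forces their sum on each $X_i$ to vanish, so $z=0$. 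This centre computation, together with the passage to $\cG$ to obtain honest stabilisation, is the idea your plan is missing.
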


\begin{proof}
Let $G=G(X,r)$ and $\cG=\cG(X,r)$.  By~\cite[Theorem
1.6]{MR1637256}, $G$ is a Bieberbach group. Hence the equivalence
between $(2)$ and $(3)$ follows from~\cite[Theorem 23]{MR639438}.
The implication $(1)\implies(3)$ is~\cite[Proposition
4.2]{MR2189580}; see also~\cite{MR3572046} for another  proof of $(1)\Rightarrow(2)$.
Let us prove $(2)\implies(1)$. For that purpose let
us assume that $(X,r)$ is not a multipermutation solution.
By~\cite[Theorem 5.15]{GI15}, the solution $(G,r_G)$ is not a
multipermutation solution. This implies that the solution
$(\cG,r_{\cG})$ is not a multipermutation solution.
Using~\cite[Proposition 6]{MR3574204} one obtains that
$G^{(m)}\ne\{0\}$ and $\cG^{(m)}\ne\{0\}$ for all $m$. Since $\cG$
is finite, there exists $m$ such that $\cG^{(m+1)}=\cG^{(m)}\ne0$.
By~\cite[Theorem 23]{MR639438}, to prove that $G$ is not left
orderable it suffices to prove that the non-trivial subgroup
$H=G^{(m+1)}$ of $(G,\cdot)$ has trivial center. Let $z\in Z(H)$.
Since $\Soc(G)$ has finite index in $G$ and $G$ is torsion free,
without loss of generality we may assume that $z\in\Soc(G)$. Notice
that if $h\in H$, then
\begin{equation}
    \label{eq:lambda}
\lambda_h(z)=hz-h=zh-h=z+h-h=z.
\end{equation}
Let $X_1,\dots,X_r$ be the orbits of $X$ under the action of
$\cG^{(m)}$. These orbits are the orbits of $X$ under the action of
$G^{(m)}$ through the map $\lambda$. Since $(G,+)$ is the free
abelian group with basis $X$, the element $z$ can be uniquely
written as $z=z_1+\cdots+z_r$, where each $z_i\in\langle
X_i\rangle_+$. From the uniqueness of the decomposition of $z$
and~\eqref{eq:lambda} one obtains that $\lambda_h(z_i)=z_i$ for all
$i\in\{1,\dots,r\}$ and $h\in H$. Now write each $z_i$ as
\[
z_i=\sum_{t\in X_i}n_tt,
\]
where each $n_x\in\Z$.
Remark~\ref{rem:generators} implies that $\sum_{t\in X_i}n_t=0$.
This decomposition is unique since $(G,+)$ is the free
abelian group with basis $X$. Let $x,y\in X_i$ be such that $x\ne y$. Then
there exists $g\in G^{(m)}$ such that $\lambda_g(x)=y$. From
$\cG^{(m+1)}=\cG^{(m)}$ it follows
\[
G^{(m)}=G^{(m+1)}+(\Soc(G)\cap G^{(m)})=H+(\Soc(G)\cap G^{(m)}).
\]
Thus $g=g_1+g_2$, where $g_1\in H$ and $g_2\in\Soc(G)\cap G^{(m)}$. Since
$g_2\in\Soc(G)$, $g=g_2g_1$. Therefore
\[
y=\lambda_g(x)=\lambda_{g_2g_1}(x)=\lambda_{g_2}\lambda_{g_1}(x)=\lambda_{g_1}(x).
\]
Since $z_i=\lambda_{g_1}(z_i)=\sum_{t\in X_i}n_t\lambda_{g_1}(t)$,
we conclude that $n_x=n_y$. Since $\sum_{t\in X_i}n_t=0$, it
follows that $n_t=0$ for all $t\in X_i$ and all $i\in\{1,\dots,r\}$.
Therefore $z=0=1$ and the result follows.
\end{proof}

%\begin{rem}
%  In~\cite{MR3572046} Chouraqui proved that $(1)\implies (2)$.
%\end{rem}

\begin{exa}
    \label{exa:B(16,91)}
    Let $X=\{1,2,3,4,5,6,7,8,9,a,b,c,d,e,f,g\}$ and let
    \begin{align*}
        &\sigma_1=\id,
        &&\sigma_2=(37)(48)(bf)(cg),\\
        &\sigma_3=(25)(3b4f)(7c8g)(9dea),
        &&\sigma_4=(25)(3g4c)(7f8b)(9dea),\\
        &\sigma_5=(38)(47)(bg)(cf),
        &&\sigma_6=(34)(78)(bc)(fg),\\
        &\sigma_7=(25)(3c7b)(4g8f)(9dea),
        &&\sigma_8=(25)(3f7g)(4b8c)(9dea),\\
        &\sigma_9=(38)(47)(9e)(ad),
        &&\sigma_a=(34)(78)(9e)(ad)(b,f)(c,g),\\
        &\sigma_b=(25)(3f4b)(7g8c)(9aed),
        &&\sigma_c=(25)(3c4g)(7b8f)(9aed),\\
        &\sigma_d=(9e)(ad)(bg)(cf),
        &&\sigma_e=(37)(48)(9e)(ad)(bc)(fg),\\
        &\sigma_f=(25)(3g7f)(4c8b)(9aed),
        &&\sigma_g=(25)(3b7c)(4f8g)(9aed).
    \end{align*}
    Then $r(x,y)=(\sigma_x(y),\sigma^{-1}_{\sigma_x(y)}(x))$ is an irretractable
    solution of the YBE. Hence its structure group $G(X,r)$ is not left
    orderable. In this case
    \[
    \cG(X,r)=\{\sigma_x:x\in X\}.
    \]
\end{exa}

\begin{exa}
    \label{exa:B(16,318)}
    Let $X=\{1,2,3,4,5,6,7,8,9,a,b,c,d,e,f,g\}$ and let
    \begin{align*}
        &\sigma_1=\id,
        &&\sigma_2=(9e)(ad)(bg)(cf),\\
        &\sigma_3=(34)(78)(9e)(ad)(bf)(cg),
        &&\sigma_4=(34)(78)(bc)(fg),\\
        &\sigma_5=(9a)(bc)(de)(fg),
        &&\sigma_6=(9d)(ae)(bf)(cg),\\
        &\sigma_7=(3 4)(78)(9d)(ae)(bg)(cf),
        &&\sigma_8=(3 4)(78)(9a)(de),\\
        &\sigma_9=(5 6)(78)(de)(fg),
        &&\sigma_a=(5 6)(78)(9dae)(bfcg),\\
        &\sigma_b=(3 4)(56)(9dae)(bgcf),
        &&\sigma_c=(3 4)(56)(bc)(de),\\
        &\sigma_d=(5 6)(78)(9ead)(bgcf),
        &&\sigma_e=(5 6)(78)(9a)(bc),\\
        &\sigma_f=(3 4)(56)(9a)(fg),
        &&\sigma_g=(3 4)(56)(9ead)(bfcg).
    \end{align*}
    Then $r(x,y)=(\sigma_x(y),\sigma^{-1}_{\sigma_x(y)}(x))$ is an irretractable
    solution of the YBE. Hence its structure group $G(X,r)$ is not left
    orderable. In this case
    \[
    \cG(X,r)=\{\sigma_x:x\in X\}\simeq\Z/2\times\D_8,
    \]
    where $\D_8$ denotes the
    dihedral group of size $8$.
\end{exa}

\begin{rem}
    The solutions of Examples~\ref{exa:B(16,91)} and~\ref{exa:B(16,318)}
    correspond to the associated solutions to the only left braces of size $16$
    with trivial socle. This was checked with~\textsf{GAP} and the list of small
    braces of~\cite{GV}.
\end{rem}

\begin{exa}
    \label{exa:B(24,96)}
    Let $X=\{1,2,3,4,5,6,7,8,9,a,b,c,d,e,f,g,h,i,j,k,l,m,n,o\}$. Let
    \begin{align*}
        &\sigma_1=\id,\\
        &\sigma_2=(4ag)(5bh)(6ci)(7jd)(8ke)(9lf),\\
        &\sigma_3=(4ga)(5hb)(6ic)(7dj)(8ek)(9fl),\\
        &\sigma_4=(23)(4ogc)(5nhb)(6mia)(7j)(8l)(9k)(ef),\\
        &\sigma_5=(23)(4cmi)(5bnh)(6aog)(7d)(8f)(9e)(kl),\\
        &\sigma_6=(23)(46)(89)(ac)(dj)(el)(fk)(go)(hn)(im),\\
        &\sigma_7=(4m)(5n)(6o)(ag)(bh)(ci),\\
        &\sigma_8=(4gm)(5hn)(6io)(7jd)(8ke)(9lf),\\
        &\sigma_9=(4am)(5bn)(6co)(7dj)(8ek)(9fl),\\
        &\sigma_a=(23)(4cgo)(5bhn)(6aim)(7j)(8l)(9k)(ef),\\
        &\sigma_b=(23)(4o)(5n)(6m)(7d)(8f)(9e)(ac)(gi)(kl),\\
        &\sigma_c=(23)(4iao)(5hbn)(6gcm)(89)(dj)(el)(fk),\\
        &\sigma_d=(4a)(5b)(6c)(gm)(hn)(io),\\
        &\sigma_e=(7jd)(8ke)(9lf)(amg)(bnh)(coi),\\
        &\sigma_f=(4mg)(5nh)(6oi)(7dj)(8ek)(9fl),\\
        &\sigma_g=(23)(46)(7j)(8l)(9k)(ao)(bn)(cm)(ef)(gi),\\
        &\sigma_h=(23)(4imc)(5hnb)(6goa)(7d)(8f)(9e)(kl),\\
        &\sigma_i=(2 3)(4oai)(5nbh)(6mcg)(89)(dj)(el)(fk),\\
        &\sigma_j=(4g)(5h)(6i)(am)(bn)(co),\\
        &\sigma_k=(4ma)(5nb)(6oc)(7jd)(8ke)(9lf),\\
        &\sigma_l=(7dj)(8ek)(9fl)(agm)(bhn)(cio),\\
        &\sigma_m=(23)(4i)(5h)(6g)(7j)(8l)(9k)(ac)(ef)(mo),\\
        &\sigma_n=(23)(46)(7d)(8f)(9e)(ai)(bh)(cg)(kl)(mo),\\
        &\sigma_o=(23)(4c)(5b)(6a)(89)(dj)(el)(fk)(gi)(mo).
    \end{align*}
    Then $r(x,y)=(\sigma_x(y),\sigma^{-1}_{\sigma_x(y)}(x))$ is an irretractable
    solution of the YBE. Hence its structure group $G(X,r)$ is not left
    orderable. In this case
    \[
    \cG(X,r)=\{\sigma_x:x\in X\}\simeq\Sym_4.
    \]
    The unique subgroup of index two of $\cG(X,r)$ is an ideal of $\cG(X,r)$.
    Hence $\cG(X,r)$ is not a simple brace.
\end{exa}

\begin{rem}
    There are two braces of size $24$ with trivial socle. One is the simple
    brace found in~\cite{BachillerSimple} and the other one is that of
    Example~\ref{exa:B(24,96)}.
\end{rem}

\bibliographystyle{abbrv}
\bibliography{refs}

\vspace{30pt}
 \noindent \begin{tabular}{llllllll}
 D. Bachiller && F. Ced\'o  \\
 Departament de Matem\`atiques &&  Departament de Matem\`atiques \\
 Universitat Aut\`onoma de Barcelona &&  Universitat Aut\`onoma de Barcelona  \\
08193 Bellaterra (Barcelona), Spain    && 08193 Bellaterra (Barcelona), Spain \\
\verb+dbachiller@mat.uab.cat+&&  \verb+cedo@mat.uab.cat+\\
\end{tabular}

\bigskip
\noindent \begin{tabular}{llllllll}
L. Vendramin\\
IMAS--CONICET and Departamento de Matem\'atica\\
FCEN, Universidad de Buenos Aires\\
Pabell\'on~1, Ciudad Universitaria
C1428EGA, Buenos Aires, Argentina\\
\verb+lvendramin@dm.uba.ar+
\end{tabular}

\end{document}